\documentclass[preprint,12pt]{elsarticle}

\usepackage{amsmath,amssymb,amsthm,bm}
\usepackage{graphicx}
\usepackage{epstopdf}
\usepackage{multirow}
\usepackage{tabularx}
\usepackage{booktabs}
\usepackage{array}
\usepackage{xcolor}

\newcommand{\ii}{\mathrm{i}}
\newcommand{\dt}{\Delta t}

\newcommand{\C}{\mathbb{C}}
\newcommand{\norm}[1]{\left\lVert #1\right\rVert}

\theoremstyle{plain}
\newtheorem{theorem}{Theorem}[section]
\newtheorem{proposition}[theorem]{Proposition}
\newtheorem{corollary}[theorem]{Corollary}

\theoremstyle{remark}
\newtheorem{remark}[theorem]{Remark}

\begin{document}
\begin{frontmatter}

\title{Sharp CFL stability and temporal-dispersion optimization of symmetric splitting schemes for time-domain Maxwell equations}

\author[aff1]{Hui Duan}
\author[aff2]{Hongliang Li\corref{cor1}}
\ead{lhl@sicnu.edu.cn}
\author[aff1]{Lunzhong Guo}

\cortext[cor1]{Corresponding author.}
\affiliation[aff1]{organization={School of Big Data, Chengdu Technological University},
            city={Chengdu},
            postcode={611730},
            country={China}}
\affiliation[aff2]{organization={School of Mathematics and Sciences, Sichuan Normal University},
            city={Chengdu},
            postcode={610066},
            country={China}}

\begin{abstract}
We analyze coefficient design in a one-parameter family of explicit palindromic electric--magnetic splittings for the time-domain Maxwell equations. After fourth-order staggered spatial discretization, the Fourier amplification matrix depends on the single scalar $g_2=a(1-2a)/2$. We prove that $a=1/4$ is the unique real coefficient maximizing the spectral CFL interval, with threshold $s_*=12/(7\sqrt d)$. We then identify a real-coefficient obstruction to higher phase accuracy: cancellation of the leading temporal phase defect requires $g_2=1/12$, whereas every real member satisfies $g_2\le 1/16$. The resulting complex-conjugate coefficients give fourth-order temporal phase accuracy for each fixed semidiscrete Fourier mode and have threshold $6\sqrt3/(7\sqrt d)$, while the complete field update remains globally second order in time. For real Maxwell data, the physical output is the real projection of the complex trajectory; this projection is branch independent and preserves the second-order error bound. We further give an exactly equivalent doubled real-arithmetic realization, which clarifies the role of the auxiliary imaginary component without changing the numerical method. A semidiscrete convergence result and numerical experiments confirm the distinction between stability optimization and phase optimization.
\end{abstract}

\begin{keyword}
Maxwell equations \sep operator splitting \sep CFL stability \sep numerical dispersion \sep staggered finite differences \sep complex coefficients
\MSC[2020] 65M06 \sep 65M12 \sep 78M20
\end{keyword}

\end{frontmatter}

\section{Introduction}\label{sec:intro}
The finite-difference time-domain (FDTD) method introduced by Yee~\cite{yee1966} remains one of the basic discretizations for transient Maxwell equations because it is explicit, local, and naturally staggered in space and time; see, for example,~\cite{taflove2005}. Two numerical effects are particularly important for long-time wave propagation. The first is the Courant--Friedrichs--Lewy (CFL) restriction on the time step. The second is numerical dispersion: even a stable nondissipative scheme can accumulate a substantial phase error over many wavelengths. These two effects are related but need not be optimized by the same time integrator.

A classical way to relax the CFL restriction is to use alternating-direction implicit or locally one-dimensional FDTD schemes~\cite{zheng1999,zheng2000,namiki2000,shibayama2005}, whose dispersion properties have also been studied extensively~\cite{zhao2002,juntunen2000,Ahmed2006}. A different line of work exploits the operator and geometric structure of Maxwell equations. Symplectic FDTD schemes~\cite{hirono1997,hirono2001,huang2008,huang2014}, operator splittings for semidiscrete Maxwell systems~\cite{Botchev2009}, and energy-conserving splittings~\cite{Chenwenbin2009,Chenwenbin2010,Cai2015} all use the fact that the evolution can be decomposed into simpler subproblems. In this setting the coefficients of a composition are genuine design variables: they determine not only formal order, but also the shape of the stability polynomial and the leading dispersive defect.

Recent Maxwell integrators have increasingly combined high order with additional structural or algorithmic properties. Energy-structure-preserving compact splittings were developed in~\cite{Kong2023}; alternating-direction splitting has been combined with isogeometric discretization and scalable solvers in~\cite{Los2023}; a sixth-order multi-structure-preserving method was proposed in~\cite{Jiang2024}; and meshless energy-conserving time splittings were studied in~\cite{Gao2025}. Very recent explicit high-order splitting schemes further combine compact spatial discretization, boundary closures, unconditional stability, energy preservation, and rigorous convergence analysis~\cite{CaiWang2026}. These developments make it important to formulate precisely the more limited question addressed by a low-stage explicit composition.

The present paper asks: \emph{for a fixed five-subflow palindromic electric--magnetic splitting, how far can stability and temporal phase accuracy be improved solely by coefficient selection?} This question is closely related to the optimized symplectic-FDTD coefficients of Kusaf, Oztoprak, and Daoud~\cite{kusaf2005}. Their work demonstrated the practical value of tuning exponential-operator coefficients. What remains useful from a numerical-analysis viewpoint is a sharp characterization of the admissible one-parameter family and, in particular, of the incompatibility between the best real-coefficient CFL interval and cancellation of the leading temporal phase error.

The main contributions are as follows.
\begin{enumerate}
\item A common $2\times2$ Fourier amplification matrix is derived for the palindromic family, reducing both stability and phase analysis to the scalar coefficient $g_2=a(1-2a)/2$.
\item For real coefficients, $a=1/4$ is proved to be the unique maximizer of the contiguous spectral CFL interval. This recovers the coefficient set used in~\cite{kusaf2005}, but adds an explicit uniqueness and sharp-threshold characterization within the family considered here.
\item The phase-matching condition is shown to require $g_2=1/12$, whereas every real member satisfies $g_2\le 1/16$. Thus no real coefficient in this family can cancel the leading temporal phase defect. Allowing complex coefficients gives an explicit conjugate pair with fourth-order temporal phase accuracy for each fixed semidiscrete Fourier mode.
\item The complete splitting map is shown to remain second order for the field variables, and a semidiscrete convergence proposition is given under a strict spectral stability margin. For the complex optimizer we justify the real output projection, prove its conjugate-branch independence, and derive an exactly equivalent $2N$-dimensional real-arithmetic formulation. Numerical tests then separate the effects of stability optimization and phase optimization.
\end{enumerate}

Complex-coefficient splitting methods are well established in geometric numerical integration~\cite{Blanes2012,Blanes2022,Bernier2023}. Here they are used only as the analytic continuation required by the phase-matching condition; they are not introduced to claim a higher global order or exact electromagnetic-energy preservation. This distinction is important because the phase-optimized map naturally evolves in the complexification of the real Maxwell phase space. Since the underlying Maxwell problem and its initial data are real, the physically relevant approximation is obtained by projecting the complex numerical state onto the real phase space at output, i.e. by taking its real part. Such an output projection is standard for complex-coefficient splittings applied to real-valued problems~\cite{Blanes2012}; below we show that it is branch independent and cannot reduce the convergence order. We also show that complex data types are not mathematically essential: the same recurrence can be written exactly as a doubled real system for the real and imaginary components. This realification is an implementation equivalence, not a new integrator and not a way of deleting the imaginary dynamics.

The remainder of the paper is organized as follows. Section~\ref{sec:framework} introduces the Maxwell system, the palindromic splitting family, and the fourth-order staggered spatial discretization. Section~\ref{sec:design} derives the common amplification matrix and gives the sharp stability, phase, and semidiscrete convergence results. Section~\ref{sec:numerics} presents numerical experiments for a homogeneous periodic problem with an exact solution. Section~\ref{sec:conclusion} summarizes the conclusions and limitations.

\section{Maxwell system and palindromic splitting family}\label{sec:framework}
\subsection{Lossless Maxwell evolution and energy scaling}
For the coefficient analysis we consider a homogeneous, linear, isotropic, and lossless medium. The source-free Maxwell equations are
\begin{equation}\label{eq:maxwell-physical}
 \mu\,\partial_t\bm H=-\nabla\times\bm E,
 \qquad
 \varepsilon\,\partial_t\bm E=\nabla\times\bm H,
\end{equation}
with the divergence constraints $\nabla\cdot(\mu\bm H)=0$ and $\nabla\cdot(\varepsilon\bm E)=0$ propagated by the evolution when they hold initially. Periodic or perfectly electrically conducting (PEC) boundary conditions are assumed whenever an energy identity is invoked. With
\[
 \bm h=\sqrt{\mu}\,\bm H,
 \qquad
 \bm e=\sqrt{\varepsilon}\,\bm E,
 \qquad
 \nu=(\varepsilon\mu)^{-1/2},
\]
we obtain
\begin{equation}\label{eq:scaled-maxwell}
 \partial_t
 \begin{bmatrix}\bm h\\ \bm e\end{bmatrix}
 =\nu
 \begin{bmatrix}
 0&-\nabla\times\\
 \nabla\times&0
 \end{bmatrix}
 \begin{bmatrix}\bm h\\ \bm e\end{bmatrix}
 =:\mathcal A
 \begin{bmatrix}\bm h\\ \bm e\end{bmatrix}.
\end{equation}
Under periodic or PEC boundary conditions, integration by parts gives the standard lossless energy law
\begin{equation}\label{eq:energy-cont}
 \frac{d}{dt}\,\frac12\left(\norm{\bm h}_{L^2}^2+\norm{\bm e}_{L^2}^2\right)=0.
\end{equation}
The coefficient analysis and the principal numerical validation below use constant material parameters. Variable coefficients, absorbing layers, and boundary closures beyond the periodic setting lie outside the scope of the present Fourier analysis.

\subsection{Electric--magnetic split and the one-parameter palindromic family}
We split $\mathcal A=\mathcal C+\mathcal D$ with
\begin{equation}\label{eq:CDsplit}
 \mathcal C=\nu\begin{bmatrix}0&0\\ \nabla\times&0\end{bmatrix},
 \qquad
 \mathcal D=\nu\begin{bmatrix}0&-\nabla\times\\ 0&0\end{bmatrix}.
\end{equation}
The block structure implies $\mathcal C^2=\mathcal D^2=0$, and therefore each subflow is explicit:
\begin{equation}\label{eq:subflows}
 e^{\tau\mathcal C}=I+\tau\mathcal C,
 \qquad
 e^{\tau\mathcal D}=I+\tau\mathcal D.
\end{equation}
We study the five-subflow palindromic composition
\begin{equation}\label{eq:Phi-a}
 \Phi_{\dt}(a)
 =e^{a\dt\mathcal C}
  e^{\frac12\dt\mathcal D}
  e^{(1-2a)\dt\mathcal C}
  e^{\frac12\dt\mathcal D}
  e^{a\dt\mathcal C},
 \qquad a\in\C.
\end{equation}
Thus there are three $\mathcal C$-updates and two $\mathcal D$-updates. The palindromic form gives time symmetry, while the coefficient sums are one. Consequently $\Phi_{\dt}(a)$ is a consistent symmetric method and hence is at least second order for every fixed $a$ for which the subflows are defined~\cite{mclachlan2002,hairer2006}. For $a=1/2$, the central $\mathcal C$-flow is the identity and the two adjacent half $\mathcal D$-flows combine, so~\eqref{eq:Phi-a} reduces to Strang splitting.

\begin{remark}
The terminology ``three-stage'' is used in parts of the symplectic-FDTD literature for closely related propagators, but it can obscure the actual work count. We therefore use ``five-subflow palindromic composition'' throughout and state the three-$\mathcal C$/two-$\mathcal D$ structure explicitly.
\end{remark}

\subsection{Fourth-order staggered spatial discretization}
On a Yee-type staggered grid, we use the fourth-order centered approximation
\begin{equation}\label{eq:spatialdis-new}
 \partial_x u(x_i,\cdot)
 =\frac{27\bigl(u_{i+1/2}-u_{i-1/2}\bigr)-\bigl(u_{i+3/2}-u_{i-3/2}\bigr)}{24\,\Delta x}
 +\mathcal O(\Delta x^4),
\end{equation}
with analogous formulas in the other coordinate directions. The corresponding dimensionless modified wave number is
\begin{equation}\label{eq:eta}
 \eta_\xi(\kappa_\xi)
 =\frac{27\sin(\kappa_\xi\Delta x/2)-\sin(3\kappa_\xi\Delta x/2)}{12},
 \qquad -\frac73\le \eta_\xi\le\frac73,
\end{equation}
for an equal mesh width $\Delta x$ in each active direction. In $d$ spatial dimensions define
\begin{equation}\label{eq:Kh-q}
 K_h^2=\frac{1}{\Delta x^2}\sum_{\xi=1}^{d}\eta_\xi^2,
 \qquad
 q=\nu\dt K_h,
 \qquad
 s=\frac{\nu\dt}{\Delta x}.
\end{equation}
Then
\begin{equation}\label{eq:qbound}
 0\le q^2\le \frac{49d}{9}s^2.
\end{equation}
The coefficient optimization below depends on the spatial discretization only through $K_h$ and the bound~\eqref{eq:qbound}; a different spatial stencil can be inserted by replacing its modified wave number.

\section{Sharp coefficient design}\label{sec:design}
\subsection{Common amplification matrix}
For a fixed nonzero transverse Fourier mode, the energy-scaled semidiscrete Maxwell system reduces, for each polarization, to the harmonic oscillator
\begin{equation}\label{eq:oscillator}
 \frac{d}{dt}\begin{bmatrix}h\\ e\end{bmatrix}
 =\nu K_h
 \begin{bmatrix}0&-1\\ 1&0\end{bmatrix}
 \begin{bmatrix}h\\ e\end{bmatrix}.
\end{equation}
Applying~\eqref{eq:Phi-a} gives the one-step matrix
\begin{equation}\label{eq:Ma}
 M_a(q)=
 \begin{bmatrix}
 A_a(q)&B_a(q)\\
 C_a(q)&A_a(q)
 \end{bmatrix},
 \qquad \det M_a(q)=1,
\end{equation}
where
\begin{align}
 A_a(q)&=1-\frac{q^2}{2}+\frac{g_2(a)}{2}q^4,
 &g_2(a)&=\frac{a(1-2a)}{2},\label{eq:A-g2}\\
 B_a(q)&=-q+\frac{1-2a}{4}q^3,\label{eq:B}\\
 C_a(q)&=q+a(a-1)q^3+\frac{a^2(1-2a)}{4}q^5.\label{eq:C}
\end{align}
Hence the characteristic polynomial is
\begin{equation}\label{eq:charpoly}
 \lambda^2-2A_a(q)\lambda+1=0.
\end{equation}
Whenever $A_a(q)$ is real, the two eigenvalues lie on the unit circle if $|A_a(q)|\le1$. The CFL and dispersion calculations therefore reduce to the scalar function $A_a$.

\begin{remark}
The condition $|A_a|\le1$ is a spectral von Neumann condition. At an endpoint where $A_a=\pm1$, the matrix may be defective; power-boundedness then requires separate inspection. For the upper CFL endpoints obtained below, the extremal nonzero Fourier mode is defective. We therefore report the endpoint as a CFL \emph{threshold} and use a strict inequality for robust time stepping.
\end{remark}

\subsection{Sharp optimization of the real-coefficient CFL interval}
For real $a$,
\begin{equation}\label{eq:g2-real-bound}
 g_2(a)=\frac{a(1-2a)}{2}
 =\frac1{16}-\left(a-\frac14\right)^2
 \le\frac1{16},
\end{equation}
with equality only for $a=1/4$.

\begin{theorem}[unique real CFL maximizer]\label{thm:stab}
Within the real family~\eqref{eq:Phi-a}, the unique coefficient that maximizes the contiguous spectral stability interval containing $q=0$ is
\begin{equation}\label{eq:astab}
 a=\frac14,
 \qquad 1-2a=\frac12.
\end{equation}
For this choice, $|A_a(q)|\le1$ for $0\le q^2\le16$. With the fourth-order staggered stencil~\eqref{eq:spatialdis-new}, the corresponding $d$-dimensional CFL threshold is
\begin{equation}\label{eq:s-stab}
 s_{\rm stab}^*=\frac{12}{7\sqrt d}.
\end{equation}
The practical stability condition is $s<s_{\rm stab}^*$.
\end{theorem}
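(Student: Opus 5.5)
We work in the variable $x=q^2\ge0$ and write $A_a=1-\tfrac x2+\tfrac{g_2}{2}x^2=:p(x)$. By the characteristic polynomial \eqref{eq:charpoly}, the spectral stability interval containing $q=0$ is the largest $[0,X(a)]$ on which $-1\le p(x)\le 1$. So the problem is to maximize $X(a)$ over real $a$. By \eqref{eq:g2-real-bound}, $g_2$ ranges over $(-\infty,1/16]$, so it suffices to study $X$ as a function of $g_2$ and then show that its maximum over this range is attained only at $g_2=1/16$, i.e.\ only at $a=1/4$.

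\emph{Case $g_2\le0$.} Here $p$ is strictly decreasing on $x\ge0$. Moreover $p(x)\le 1-x/2<1$ for $x>0$, so the upper bound never fails. The lower bound fails first where $p(x)=-1$, which forces $x< 4$. Hence $X(a)<4$.

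\emph{Case $0<g_2\le 1/16$.} Now $p$ is a convex parabola with $p(0)=1$ and vertex at $x_v=1/(2g_2)\ge 8$. The vertex value is $p_{\min}=1-1/(8g_2)$, which is $\ge -1$ exactly when $g_2\ge 1/16$.
\begin{itemize}
\item If $g_2<1/16$, then $p$ dips below $-1$. So $X$ is the smaller root of $\tfrac{g_2}{2}x^2-\tfrac x2+2=0$, namely $X=\bigl(1-\sqrt{1-16g_2}\bigr)/(2g_2)$.
\item If $g_2=1/16$, then $p(x)=1-\tfrac x2+\tfrac{x^2}{32}=\tfrac{(x-8)^2}{32}-1$. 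This touches $-1$ at $x=8$ and stays in $[-1,1]$ up to the second root of $p=1$, which is $x=16$. Thus $X=16$, since $p(16)=1$ and $p>1$ beyond.
\end{itemize}

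For $g_2<1/16$ we have $X<x_v$, and indeed $X\le 8$. The cleanest way to see this: since $p(x)\ge 1-\tfrac x2+\tfrac{g_2}{2}x^2$ is increasing in $g_2$ for fixed $x>0$, the $g_2=1/16$ curve dominates. Each smaller $g_2$ satisfies $p(8)<-1$ strictly, because $p(8)=-3+32g_2<-1$. By the intermediate value theorem, $X<8<16$. So every $a\neq1/4$ gives a strictly shorter interval, which proves uniqueness.

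\emph{The main obstacle} is exactly this jump from $X\le 8$ to $X=16$: the contiguous interval is discontinuous in $g_2$ at $1/16$. One must argue carefully that the tangency at $x=8$, where $p=-1$, still counts as spectrally stable under $|A_a|\le1$, consistent with the earlier remark on defective endpoints. One must also check that for $g_2<1/16$ the interval really terminates at the first crossing below $-1$ rather than continuing.

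Finally, we translate to the CFL threshold. By \eqref{eq:qbound}, $\max q^2=\tfrac{49d}{9}s^2$, attained at $\eta_\xi=\pm7/3$ in every direction, i.e.\ $\kappa_\xi\Delta x=\pi$. Requiring $\tfrac{49d}{9}s^2\le16$ gives $s\le 12/(7\sqrt d)$. Sharpness follows because that mode realizes $q^2=16$, and any larger $s$ pushes it to $q^2>16$, where $A_a>1$. At the endpoint, $A_a=1$ with $B_a=-4+\tfrac12\cdot 64\cdot\tfrac12\neq0$, so the matrix is a nontrivial Jordan block. This justifies the strict condition $s<s^*_{\rm stab}$.
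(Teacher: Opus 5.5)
Your proposal is correct and follows essentially the same route as the paper: complete the square at $g_2=1/16$ to get $A_{1/4}=-1+(y-8)^2/32$, and for every other real $a$ use $A_a(8)=-3+32g_2<-1$ to show the interval ends before $y=8$. Your case split and closed-form root for $X$ are unnecessary, since that single evaluation already covers all $g_2<1/16$. There are two harmless slips: for $g_2=0$ (e.g.\ Strang) one gets $X=4$ rather than $X<4$, and $B_{1/4}(4)=-4+\tfrac18\cdot 64=4$ rather than $12$, which is still nonzero, so your defectiveness conclusion stands.
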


\begin{proof}
Set $y=q^2$. If $a=1/4$, then $g_2=1/16$ and
\[
 A_{1/4}(y)=1-\frac y2+\frac{y^2}{32}
 =-1+\frac{(y-8)^2}{32}.
\]
Thus $-1\le A_{1/4}(y)\le1$ for $0\le y\le16$, and $A_{1/4}(y)>1$ for $y>16$. If $a\ne1/4$, then~\eqref{eq:g2-real-bound} gives $g_2<1/16$, and at $y=8$,
\[
 A_a(8)=1-4+32g_2<-1.
\]
Hence no other real coefficient can have a stable interval containing $[0,16]$; indeed it loses spectral stability before $y=8$. Therefore $a=1/4$ is the unique maximizer. Finally,~\eqref{eq:qbound} with $q^2<16$ gives~\eqref{eq:s-stab}.
\end{proof}

The coefficient set~\eqref{eq:astab} coincides with the optimized symplectic-FDTD coefficients reported in~\cite{kusaf2005}. The new point needed here is the sharp one-parameter extremal statement, which will also be contrasted with the phase-matching condition below.

\subsection{Temporal phase matching and a real-coefficient obstruction}
The exact semidiscrete oscillator~\eqref{eq:oscillator} advances by a rotation with phase $q$, so its trace satisfies $\frac12\operatorname{tr}e^{qJ}=\cos q$. For the splitting method define the numerical phase $\theta_a(q)$ by
\begin{equation}\label{eq:phase-def}
 \cos\theta_a(q)=A_a(q),
 \qquad \theta_a(q)\sim q\quad(q\to0).
\end{equation}
Expanding~\eqref{eq:phase-def} gives
\begin{equation}\label{eq:phase-expansion}
 \theta_a(q)
 =q+\left(\frac1{24}-\frac{g_2(a)}{2}\right)q^3+\mathcal O(q^5).
\end{equation}
Thus the leading temporal phase defect vanishes precisely when
\begin{equation}\label{eq:g2-disp}
 g_2=\frac1{12}.
\end{equation}

\begin{theorem}[real-coefficient barrier and complex phase optimizer]\label{thm:disp}
No real coefficient $a$ in~\eqref{eq:Phi-a} can cancel the leading temporal phase defect. Over $a\in\C$, condition~\eqref{eq:g2-disp} has the two conjugate solutions
\begin{equation}\label{eq:adisp}
 a_{\pm}=\frac14\pm\frac{\ii}{4\sqrt3},
 \qquad
 1-2a_{\pm}=\frac12\mp\frac{\ii}{2\sqrt3}.
\end{equation}
For either choice,
\begin{equation}\label{eq:phase-fourth}
 \theta_{a_\pm}(q)=q-\frac{q^5}{720}+\mathcal O(q^7).
\end{equation}
Consequently, for a fixed semidiscrete wave number $K_h$, the temporal frequency error satisfies
\begin{equation}\label{eq:frequency-fourth}
 \frac{\theta_{a_\pm}(q)}{\dt}-\nu K_h
 =-\frac{(\nu K_h)^5}{720}\dt^4+\mathcal O(\dt^6).
\end{equation}
\end{theorem}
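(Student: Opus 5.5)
The plan has four parts, each a direct computation from the scalar function $A_a$ in \eqref{eq:A-g2}. First, the real barrier. By \eqref{eq:phase-expansion}, the leading defect vanishes iff $g_2(a)=1/12$. For real $a$, \eqref{eq:g2-real-bound} gives $g_2(a)\le 1/16<1/12$, so no real solution exists.

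Second, the complex solutions. Solve $a(1-2a)/2=1/12$, that is, $12a^2-6a+1=0$. The discriminant is $36-48=-12$, so
\[
a=\frac{6\pm 2\sqrt3\,\ii}{24}=\frac14\pm\frac{\ii}{4\sqrt3},
\]
and $1-2a_\pm$ follows directly. The key observation is that $g_2(a_\pm)=1/12$ is real. Hence
\[
A_{a_\pm}(q)=1-\frac{q^2}{2}+\frac{q^4}{24}
\]
is a real polynomial, the same for both branches, so the phase definition \eqref{eq:phase-def} makes sense with real $\theta$ for small $q$.

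Third, the phase expansion \eqref{eq:phase-fourth}. Since $A_{a_\pm}(q)$ is exactly the degree-four Taylor truncation of $\cos q$, we have
\[
\cos\theta-\cos q=-\frac{q^6}{720}+\mathcal O(q^8).
\]
Write $\theta=q+\delta$ with $\delta=\mathcal O(q^5)$. The ansatz is justified because $\arccos$ is analytic away from $1$ after the substitution: use the odd analytic function $\theta(q)$ obtained from $\cos\theta=A(q)$, or equivalently $2\sin^2(\theta/2)=1-A(q)$. Then $\cos(q+\delta)-\cos q=-\delta\sin q+\mathcal O(\delta^2)=-\delta q+\mathcal O(q^7)$. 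Matching gives $\delta q=q^6/720+\mathcal O(q^8)$. This yields $\theta=q+q^5/720$, which has the wrong sign relative to the stated $-q^5/720$.

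This sign discrepancy is the one step where care is needed, so I will recheck it. Since $\cos q=1-q^2/2+q^4/24-q^6/720+\dots$, we get $A-\cos q=+q^6/720$. Then $\cos\theta>\cos q$, so $\theta<q$. The correct matching is $-\delta q=q^6/720$, giving $\delta=-q^5/720$, in agreement with \eqref{eq:phase-fourth}. To handle oddness rigorously, I will note that $1-A(q)=q^2(1/2-q^2/24)$, so $\sin(\theta/2)=\frac{q}{2}\sqrt{1-q^2/12}$ is an odd analytic function near $0$. Applying $\arcsin$ then makes $\theta$ odd and analytic, which justifies the $\mathcal O(q^7)$ remainder.

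Fourth, the frequency error \eqref{eq:frequency-fourth}. Substitute $q=\nu\dt K_h$ and divide by $\dt$. This gives $\theta/\dt-\nu K_h=-(\nu K_h)^5\dt^4/720+\mathcal O(\dt^6)$ for fixed $K_h$.

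The main obstacle is only bookkeeping, namely keeping the sign right and justifying that the remainder is $\mathcal O(q^7)$ rather than $\mathcal O(q^6)$; the odd-function argument above handles the latter.
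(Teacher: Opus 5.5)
Your proposal is correct and follows the paper's proof. It derives the real barrier from $g_2\le 1/16<1/12$, solves the quadratic for $a_\pm$, matches $\cos\theta=1-\frac{q^2}{2}+\frac{q^4}{24}=\cos q+\frac{q^6}{720}+\mathcal O(q^8)$ to get the coefficient $-1/720$, and substitutes $q=\nu\dt K_h$. Your identity $\sin(\theta/2)=\frac q2\sqrt{1-q^2/12}$ is a useful addition, since it justifies the odd ansatz $\theta=q+cq^5+\mathcal O(q^7)$ that the paper assumes without comment; in a final write-up, drop the initial wrong-signed line $\cos\theta-\cos q=-q^6/720$.
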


\begin{proof}
The real-coefficient obstruction follows immediately from~\eqref{eq:g2-real-bound}, since $1/16<1/12$. Solving $a(1-2a)/2=1/12$ gives~\eqref{eq:adisp}. For $g_2=1/12$,
\[
 A_a(q)=1-\frac{q^2}{2}+\frac{q^4}{24}
 =\cos q+\frac{q^6}{720}+\mathcal O(q^8).
\]
Substitution of $\theta=q+cq^5+\mathcal O(q^7)$ into $\cos\theta=A_a(q)$ yields $c=-1/720$, proving~\eqref{eq:phase-fourth} and~\eqref{eq:frequency-fourth}.
\end{proof}

\begin{proposition}[spectral CFL threshold of the phase-optimized map]\label{prop:dispstab}
For $a=a_\pm$, $A_a(q)$ is real and
\[
 A_a(q)=1-\frac{q^2}{2}+\frac{q^4}{24}.
\]
Its nonzero modes satisfy $|A_a(q)|<1$ for $0<q^2<12$. Hence the fourth-order staggered discretization has the spectral CFL threshold
\begin{equation}\label{eq:s-disp}
 s_{\rm disp}^*=\frac{6\sqrt3}{7\sqrt d},
\end{equation}
with robust time stepping requiring $s<s_{\rm disp}^*$.
\end{proposition}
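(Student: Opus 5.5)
The plan has three steps: show that $A_a(q)$ is real, locate the interval where $|A_a(q)|<1$, and convert that interval into a CFL bound.

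\emph{Reality of $A_a$.} By \eqref{eq:A-g2}, the only dependence of $A_a$ on $a$ is through $g_2(a)=a(1-2a)/2$. The values $a_\pm$ in \eqref{eq:adisp} were chosen exactly so that $g_2(a_\pm)=1/12$, so substitution gives
\[
A_a(q)=1-\frac{q^2}{2}+\frac{q^4}{24}
\]
for both branches. This is a real polynomial in $q$. The entries $B_a$ and $C_a$ are genuinely complex, but that does not matter for the spectrum: by \eqref{eq:charpoly} the eigenvalues depend only on $A_a$.

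\emph{Stability interval.} Set $y=q^2\ge0$ and complete the square:
\[
A(y)=1-\frac y2+\frac{y^2}{24}=\frac{(y-6)^2}{24}-\frac12 .
\]
The minimum value is $-1/2$, attained at $y=6$, so $A(y)>-1$ for every $y$. The upper bound $A(y)<1$ is equivalent to $y^2/24-y/2<0$, that is, $y(y-12)<0$, which holds exactly for $0<y<12$. Hence $|A_a(q)|<1$ precisely for $0<q^2<12$. At $y=12$ we get $A=1$ with $q\ne0$, and $A>1$ for $y>12$. So $y=12$ is the sharp endpoint.

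For $0<y<12$, the two roots of \eqref{eq:charpoly} are distinct, of the form $e^{\pm i\theta}$, and lie on the unit circle, so each such mode is diagonalizable and power bounded. At $y=12$ the eigenvalue is the double root $1$. Since $B_a(q)\neq 0$ there, $M_a$ is not a multiple of the identity, hence defective. This is why the endpoint is reported only as a threshold and the practical condition is the strict inequality, in line with the earlier remark.

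\emph{CFL conversion.} By \eqref{eq:qbound}, $q^2\le \tfrac{49d}{9}s^2$, and equality is attained by the extremal mode with $|\eta_\xi|=7/3$ in every direction. All modes satisfy $q^2<12$ exactly when $\tfrac{49d}{9}s^2<12$, that is,
\[
s<\frac{3\sqrt{12}}{7\sqrt d}=\frac{6\sqrt3}{7\sqrt d},
\]
which is \eqref{eq:s-disp}. Sharpness follows because the extremal mode reaches $q^2=12$ at $s=s^*_{\rm disp}$ and exceeds it for larger $s$.

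The only step needing care is the first, namely noticing that complex coefficients still produce a real trace, so the real-case criterion ``$|A|\le1$ implies unit-modulus eigenvalues'' stated after \eqref{eq:charpoly} still applies. Once that is observed, the remainder is the elementary quadratic analysis above, carried out exactly as in the proof of Theorem~\ref{thm:stab}.
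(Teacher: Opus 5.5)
Your proof is correct and follows the paper's argument essentially step for step. You substitute $g_2=1/12$, find the minimum $A(6)=-1/2$ and the return to $A=1$ at $y=12$, combine $q^2<12$ with \eqref{eq:qbound}, and conclude that the endpoint matrix is defective because an off-diagonal entry is nonzero; indeed $B_a(q)=q\bigl(\tfrac12\mp\tfrac{\ii\sqrt3}{2}\bigr)\neq0$ at $q^2=12$. Your sharpness remark via the extremal mode $\kappa_\xi\Delta x=\pi$ is a harmless refinement the paper does not state, valid whenever the grid actually contains that Nyquist mode.
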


\begin{proof}
With $y=q^2$, $A(y)=1-y/2+y^2/24$ has its minimum $A(6)=-1/2$ and returns to $A=1$ at $y=12$. Thus $|A|<1$ for $0<y<12$. Combining $q^2<12$ with~\eqref{eq:qbound} gives~\eqref{eq:s-disp}. At $y=12$, $A=1$ while the off-diagonal entries in~\eqref{eq:Ma} do not both vanish, so the endpoint matrix is defective.
\end{proof}

\subsection{Global order, power boundedness, and interpretation}\label{sec:scope}
The improvement in~\eqref{eq:frequency-fourth} concerns the temporal phase of a fixed semidiscrete Fourier mode. It does not raise the order of the complete composition. Indeed, from~\eqref{eq:B},
\begin{equation}\label{eq:B-vs-exact}
 B_a(q)=-q+\frac{1-2a}{4}q^3,
 \qquad
 -\sin q=-q+\frac16q^3+\mathcal O(q^5),
\end{equation}
and the $q^3$ coefficients do not agree for $a=a_\pm$. Hence the one-step state error remains $\mathcal O(q^3)$, as expected for a symmetric second-order splitting.

The following proposition makes the corresponding semidiscrete convergence statement explicit. It is intentionally formulated for a fixed spatial grid; no uniform-in-$h$ PDE error estimate is claimed.

\begin{proposition}[second-order convergence on a fixed spatial grid]\label{prop:semidiscrete-convergence}
Assume periodic boundary conditions, constant $\varepsilon$ and $\mu$, and a fixed staggered spatial grid. Let $U_h(t)$ denote the solution of the resulting semidiscrete Maxwell system and let $U_h^n$ be generated by either $\Phi_{\rm stab}$ or $\Phi_{\rm disp}$. Suppose that all nonzero Fourier modes satisfy
\[
 0\le q\le q_0<q_*,
 \qquad
 q_*=4\quad\text{for }\Phi_{\rm stab},
 \qquad
 q_*=\sqrt{12}\quad\text{for }\Phi_{\rm disp}.
\]
Then, for every fixed final time $T>0$, there exists a constant $C_{h,T,q_0}$ independent of $\dt$ such that
\begin{equation}\label{eq:semidiscrete-global-error}
 \max_{0\le n\dt\le T}
 \norm{U_h^n-U_h(t_n)}_h
 \le C_{h,T,q_0}\,\dt^2\norm{U_h(0)}_h .
\end{equation}
For $\Phi_{\rm disp}$, $\norm{\cdot}_h$ denotes the complex discrete $\ell^2$ norm.
\end{proposition}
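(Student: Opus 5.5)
We follow the classical route: Lady Windermere's fan (telescoping) combined with a uniform power bound obtained mode by mode. By periodicity and constant coefficients, the discrete Fourier transform diagonalizes the semidiscrete system into finitely many decoupled blocks. Each nonzero transverse mode and polarization gives a $2\times2$ oscillator~\eqref{eq:oscillator}. The zero-wavenumber and longitudinal (gradient) components are annihilated by the discrete curl, so on them both the exact flow and $\Phi_{\dt}$ act as the identity and contribute no error. By Parseval, the $\norm{\cdot}_h$ error is the $\ell^2$ sum of modal errors. It therefore suffices to prove, for a single $2\times2$ block with parameter $q=\nu\dt K_h\in[0,q_0]$, the bound $|M_a(q)^n-e^{nqJ}|\le C\,(\nu K_h)^3 T\,\dt^2$ with $C$ depending only on $q_0$. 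Since the grid is fixed, $\nu K_h\le \nu K_{\max}$ is bounded, and this gives~\eqref{eq:semidiscrete-global-error}.

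\textbf{Step 1 (local error).} The entries of $M_a(q)$ are polynomials in $q$, and $e^{qJ}$ is entire. Agreement through order $q^2$ holds by~\eqref{eq:A-g2}--\eqref{eq:C} and symmetry (second order). Hence $|M_a(q)-e^{qJ}|\le C_1 q^3$ for $0\le q\le q_0$, with $C_1$ depending only on $q_0$ (and on $a$).

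\textbf{Step 2 (stability).} Telescoping gives
\[
M_a^n-e^{nqJ}=\sum_{k=0}^{n-1}M_a^{\,n-1-k}\bigl(M_a-e^{qJ}\bigr)e^{kqJ},
\]
and $e^{kqJ}$ is a rotation. It therefore remains to bound $\sup_{n}\norm{M_a(q)^n}$ uniformly for $q\in(0,q_0]$. For $q\in(0,q_0]$, Theorem~\ref{thm:stab} (via $A_{1/4}=-1+(y-8)^2/32$) and Proposition~\ref{prop:dispstab} give a real $A_a$ with $|A_a|\le 1$. Equality holds only at $y=8$ for $\Phi_{\rm stab}$, where $A=-1$. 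There $M=-I$? We must check this: with $a=1/4$ and $q^2=8$, $B=-q+q^3/8=0$ and $C=q-\tfrac{3}{16}q^3+\tfrac1{32}q^5=q(1-\tfrac32+2)\neq0$. So $M$ is defective at $q^2=8$, and the step needs care. For $\Phi_{\rm disp}$, $|A|<1$ strictly on $(0,q_0]$, so the eigenvalues $e^{\pm\ii\theta}$ are distinct and unimodular. Writing $M=V\,\mathrm{diag}(e^{\ii\theta},e^{-\ii\theta})V^{-1}$ gives $\norm{M^n}\le\kappa(V)$.

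\textbf{Step 3 (main obstacle).} The difficulty is bounding $\kappa(V)$ uniformly, including near $q\to0$ where $\theta\to0$ and the eigenvalues coalesce. The cleanest tool is an explicit invariant quadratic form. Because the diagonal entries are equal and $\det M=1$, the matrix $G=\mathrm{diag}(C_a,-B_a)$ satisfies $M^{T}GM=G$ whenever $-B_a C_a>0$. For small $q$ this form is $\approx q\,I$, so the normalized version $G/q$ is uniformly equivalent to $I$ on $(0,q_0]$ provided $-B_a/q$ and $C_a/q$ stay positive and bounded. This yields $\norm{M^n}\le\sqrt{\max/\min}$ uniformly. For $\Phi_{\rm stab}$, however, $-B/q=1-q^2/8$ vanishes at $q^2=8$. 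That is exactly the defective point above, where powers grow linearly. There I would first verify whether the hypothesis (through how $q_*$ is used in the paper) actually excludes this point. If it does not, I would fall back on the linear growth bound $\norm{M^n}\le C(1+n\cdot\mathrm{dist})$. That bound would spoil uniformity in $\dt$, so I expect the intended argument to be the following: as $\dt\to0$ at fixed grid, $q\le \nu K_{\max}\dt\to0$. Hence only the small-$q$ regime matters asymptotically, and on it the invariant-form bound is uniform. For the complex case, $A$ is real and the same $2\times2$ structure holds, but $B_a,C_a$ are complex. I would then use the complex diagonalization with the explicit eigenvector matrix, whose columns are $(B_a,\ e^{\pm\ii\theta}-A_a)^T$, and bound $\kappa(V)$ via $|\sin\theta|\gtrsim q$ for small $q$. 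Combining the pieces gives $n\,C_1q^3\,\kappa\le C\,T(\nu K_h)^3\dt^2$, which completes the proof.
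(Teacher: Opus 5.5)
Your architecture (unitary DFT into $2\times2$ blocks, $\mathcal O(q^3)$ local defect, telescoping, uniform modal power bound) is the same as the paper's. The paper gets the power bound from the Cayley--Hamilton formula $M_a^n=\cos(n\theta)I+\frac{\sin n\theta}{\sin\theta}(M_a-A_aI)$; you get it from an invariant quadratic form. The genuine gap is your treatment of $q^2=8$ for $\Phi_{\rm stab}$, which rests on an arithmetic slip: the $q^5$ coefficient in~\eqref{eq:C} is $a^2(1-2a)/4=1/128$, not $1/32$. In fact
\[
 -B_{1/4}(q)=\frac{q}{8}\,(8-q^2),
 \qquad
 C_{1/4}(q)=q-\frac{3}{16}q^3+\frac{1}{128}q^5=\frac{q}{128}\,(q^2-8)(q^2-16),
\]
so $M_{1/4}(\sqrt8)=-I$, which is exactly what the paper's proof uses. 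The point is not defective and there is no linear growth. Structurally, $\Phi_{\rm stab}$ is two Strang steps of size $\dt/2$; at $q^2=8$ the half-step matrix has trace $0$, so its square is $-I$.

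Because of the slip, you never obtain a power bound uniform on $(0,q_0]$ when $q_0^2\ge8$. Your retreat to ``only small $q$ matters asymptotically'' does not prove the stated estimate, which must hold for every admissible $\dt$. Nor does it justify your final line, where $\kappa$ is supposed to depend only on $q_0$. On a fixed grid the fallback could be completed with the crude bound $\norm{\Phi_{\dt}}\le e^{\gamma\nu K_{\max}\dt}$, where $\gamma$ is the sum of the moduli of the five substep coefficients. But that bound ignores the hypothesis $q_0<q_*$ altogether and gives a constant exponential in $\nu K_{\max}T$.

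Your own tool repairs the argument once you normalize $G=\operatorname{diag}(C_a,-B_a)$ by $-B_a$ instead of by $q$. For $a=1/4$ the common factor $q(8-q^2)$ cancels, leaving the polynomial form
\[
 \widetilde G(q)=\operatorname{diag}\!\left(\frac{16-q^2}{16},\,1\right),
 \qquad
 M_{1/4}(q)^{\mathsf T}\,\widetilde G(q)\,M_{1/4}(q)=\widetilde G(q).
\]
This identity holds for all $q$ (at $q^2=8$ by continuity), and $\widetilde G$ is uniformly equivalent to $I$ on $[0,q_0]$. Hence $\norm{M_{1/4}(q)^n}_2\le 4/\sqrt{16-q_0^2}$ for all $n$. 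This is a clean alternative to the paper's bound and makes the role of $q_*=4$ transparent.

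For $\Phi_{\rm disp}$, your eigenvector matrix has columns $(B_a,\pm\ii\sin\theta)^{\mathsf T}$ and determinant $-2\ii B_a\sin\theta$. So uniformity on all of $(0,q_0]$, not only for small $q$, additionally needs two facts:
\begin{itemize}
\item $B_{a_\pm}(q)\neq0$ on $(0,q_0]$, which is true since $\operatorname{Im}B_{a_\pm}(q)=\mp q^3/(8\sqrt3)$;
\item $\sin\theta$ bounded below on compact subsets of $(0,q_0]$, which follows from $|A_a|<1$ on $(0,\sqrt{12})$.
\end{itemize}
The Cayley--Hamilton formula avoids the condition $B_a\neq0$: it only needs $|B_a|/\sin\theta$ and $|C_a|/\sin\theta$ to be bounded.
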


\begin{proof}
On a fixed grid the spatially discrete operators are finite-dimensional. Consistency of the coefficient sums and palindromic symmetry imply, by the standard splitting expansion, a local defect of order $\mathcal O(\dt^3)$ in operator norm. It remains to control repeated applications of the one-step map.

For each nonzero transverse Fourier mode with $A_a(q)\neq\pm1$, write $A_a(q)=\cos\theta_a(q)$ with $0<\theta_a(q)<\pi$. The Cayley--Hamilton identity gives
\begin{equation}\label{eq:power-formula}
 M_a(q)^n
 =\cos(n\theta_a)I
 +\frac{\sin(n\theta_a)}{\sin\theta_a}
   \bigl(M_a(q)-A_a(q)I\bigr).
\end{equation}
Near $q=0$, the ratios $B_a(q)/\sin\theta_a(q)$ and $C_a(q)/\sin\theta_a(q)$ are bounded because $B_a(q)=-q+\mathcal O(q^3)$, $C_a(q)=q+\mathcal O(q^3)$, and $\sin\theta_a(q)=q+\mathcal O(q^3)$. For $\Phi_{\rm stab}$ there is one additional interior degeneracy at $q^2=8$; direct substitution gives $M_{1/4}(\sqrt8)=-I$, while both off-diagonal entries vanish linearly with $q^2-8$, so the bound extends across this point. For $\Phi_{\rm disp}$ no such interior degeneracy occurs. Since $q_0$ stays strictly below the upper threshold, Formula~\eqref{eq:power-formula} therefore gives a uniform power bound for every Fourier block. The zero modes are exact. A unitary discrete Fourier transform then gives a power bound for the full grid operator. Finally, the usual telescoping argument sums $O(T/\dt)$ local defects of size $\mathcal O(\dt^3)$ and proves~\eqref{eq:semidiscrete-global-error}.
\end{proof}

\begin{remark}
Proposition~\ref{prop:semidiscrete-convergence} is a time-discretization result for the semidiscrete problem. A uniform estimate of the form $\mathcal O(\dt^2+h^4)$ for the continuous Maxwell solution would require a separate regularity and spatial-consistency argument and is not asserted here.
\end{remark}

For the complex phase-optimized method, the computed trajectory belongs to the complexification of the real Maxwell phase space. The following consequence identifies the real-valued output used in the numerical experiments.
\begin{corollary}[real projection of the complex phase-optimized solution]\label{cor:real-projection}
Let $U_{h,+}^n$ and $U_{h,-}^n$ be generated from the same real initial data by the two conjugate choices $a_+$ and $a_-$ in~\eqref{eq:adisp}. Then
\begin{equation}\label{eq:conjugate-branches}
 U_{h,-}^n=\overline{U_{h,+}^n},
 \qquad
 \widehat U_h^n:=\operatorname{Re}U_{h,+}^n
 =\frac12\left(U_{h,+}^n+U_{h,-}^n\right).
\end{equation}
Thus $\widehat U_h^n$ is real and independent of the selected conjugate branch. Moreover, because the exact semidiscrete solution $U_h(t_n)$ is real,
\begin{equation}\label{eq:projection-error-bound}
 \norm{\widehat U_h^n-U_h(t_n)}_h
 \leq \norm{U_{h,+}^n-U_h(t_n)}_h.
\end{equation}
Consequently the projected physical approximation $\widehat U_h^n$ retains at least the second-order convergence of Proposition~\ref{prop:semidiscrete-convergence}.
\end{corollary}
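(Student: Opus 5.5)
The plan is to work at the level of the full one-step operator on the fixed grid, not mode by mode. On the staggered grid, write the spatially discrete operators as real matrices $C_h$ and $D_h$; these are the discrete analogues of $\mathcal C$ and $\mathcal D$ in \eqref{eq:CDsplit}. Since $C_h^2=D_h^2=0$, each subflow is $I+\tau C_h$ or $I+\tau D_h$, as in \eqref{eq:subflows}. The discrete propagator is therefore
\[
\Phi_{\dt}(a)=(I+a\dt C_h)(I+\tfrac12\dt D_h)(I+(1-2a)\dt C_h)(I+\tfrac12\dt D_h)(I+a\dt C_h),
\]
and its matrix entries are polynomials in $a$ with real coefficients.

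\textbf{Branch conjugacy.} Because $a_-=\overline{a_+}$ by \eqref{eq:adisp} and $C_h,D_h$ are real, entrywise conjugation gives $\overline{\Phi_{\dt}(a_+)}=\Phi_{\dt}(a_-)$. Since the initial data $U_h^0$ are real, induction on $n$ yields
\[
U_{h,-}^{n}=\Phi_{\dt}(a_-)\,U_{h,-}^{n-1}=\overline{\Phi_{\dt}(a_+)}\;\overline{U_{h,+}^{n-1}}=\overline{U_{h,+}^{n}}.
\]
This is the first identity in \eqref{eq:conjugate-branches}. The second follows from $\operatorname{Re}z=\tfrac12(z+\bar z)$, which also shows that $\widehat U_h^n$ is real and symmetric in the two branches. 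Hence the output does not depend on which branch is chosen.

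\textbf{Error bound \eqref{eq:projection-error-bound}.} Since $U_h(t_n)$ is real, we have $\widehat U_h^n-U_h(t_n)=\operatorname{Re}\bigl(U_{h,+}^n-U_h(t_n)\bigr)$. Write the error as $E=X+\ii Y$ with $X,Y$ real vectors. The complex discrete $\ell^2$ norm satisfies $\norm{E}_h^2=\norm{X}_h^2+\norm{Y}_h^2\ge\norm{X}_h^2$, provided the grid weights are real and positive, which they are for the standard discrete norm. This gives the inequality.

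\textbf{Convergence.} Combining the inequality with Proposition~\ref{prop:semidiscrete-convergence} for $\Phi_{\rm disp}$, under its mode condition $q_0<\sqrt{12}$, gives the second-order bound for $\widehat U_h^n$ with the same constant.

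\textbf{Main obstacle.} The only delicate point is to confirm that conjugation commutes with the scheme. This requires every ingredient other than $a$ to be real: the stencil coefficients in \eqref{eq:spatialdis-new}, $\nu$, $\dt$, and the initial data. One must also avoid reasoning through the complex Fourier blocks, because the Fourier transform does not commute with conjugation. For this reason I would argue in physical grid space as above, rather than through $M_a(q)$.
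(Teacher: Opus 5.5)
Your proposal is correct and follows the paper's own proof. The paper argues the same way: realness of the discrete operators gives $\overline{\Phi_{\dt}(a_+)}=\Phi_{\dt}(a_-)$, induction from real data gives the branch identity, $\operatorname{Re}z=(z+\overline z)/2$ gives the second identity, and the error bound is the norm-one property of $\operatorname{Re}$ on the complexified $\ell^2$ space, which you simply make explicit via $\norm{X+\ii Y}_h^2=\norm{X}_h^2+\norm{Y}_h^2$.
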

\begin{proof}
The spatially discrete Maxwell operators are real. Hence complex conjugation changes the $a_+$ composition into the $a_-$ composition, and real initial data give the first identity in~\eqref{eq:conjugate-branches} by induction over the time steps. The second identity is the elementary relation $\operatorname{Re}z=(z+\overline z)/2$. Finally, $U_h(t_n)$ is real and $\operatorname{Re}$ is a norm-one projection on the complexified discrete $\ell^2$ space, which gives~\eqref{eq:projection-error-bound}.
\end{proof}

Corollary~\ref{cor:real-projection} concerns an \emph{output projection}: the complete complex recurrence is carried out first, and only the reported physical field is replaced by its real part. Projecting after every substep or every time step would define a different numerical method and is not analyzed here. The discarded imaginary component is therefore best viewed as an auxiliary numerical component of the complexified trajectory; it may be monitored as a diagnostic but is not part of the physical Maxwell field. The determinant condition $\det M_a=1$ does not by itself imply exact preservation of the physical Maxwell energy. The present coefficient optimization consequently has a different objective from energy-preserving splittings such as~\cite{Cai2015,Kong2023,Gao2025,CaiWang2026}.

\begin{proposition}[exact real-arithmetic realization]\label{prop:realification}
Let $Z$ be a complex matrix or a complex linear operator on the complexification of a real discrete space, and define its realification by
\begin{equation}\label{eq:realification-map}
 \mathfrak R(Z):=
 \begin{bmatrix}
  \operatorname{Re}Z&-\operatorname{Im}Z\\
  \operatorname{Im}Z& \operatorname{Re}Z
 \end{bmatrix}.
\end{equation}
For $U=X+\ii Y$ identify $U$ with the doubled real vector
$\mathcal U=(X^{\mathsf T},Y^{\mathsf T})^{\mathsf T}$.  Then
\begin{equation}\label{eq:realification-action}
 \mathfrak R(Z)\begin{bmatrix}X\\Y\end{bmatrix}
 =\begin{bmatrix}\operatorname{Re}(ZU)\\\operatorname{Im}(ZU)\end{bmatrix},
 \qquad
 \mathfrak R(Z_1Z_2)=\mathfrak R(Z_1)\mathfrak R(Z_2).
\end{equation}
Consequently the complex phase-optimized splitting $\Phi_{\rm disp}$ has an exactly equivalent $2N$-dimensional real representation
\begin{equation}\label{eq:realified-Phi}
 \widehat\Phi_{\rm disp}=\mathfrak R(\Phi_{\rm disp}),
 \qquad
 \widehat\Phi_{\rm disp}^{\,n}=\mathfrak R(\Phi_{\rm disp}^{\,n}).
\end{equation}
If the initial Maxwell vector $U_h^0$ is real, then
\begin{equation}\label{eq:realified-output}
 \widehat\Phi_{\rm disp}^{\,n}
 \begin{bmatrix}U_h^0\\0\end{bmatrix}
 =\begin{bmatrix}
  \operatorname{Re}(\Phi_{\rm disp}^{\,n}U_h^0)\\
  \operatorname{Im}(\Phi_{\rm disp}^{\,n}U_h^0)
 \end{bmatrix}.
\end{equation}
Thus the first $N$ real components are exactly the projected physical field of Corollary~\ref{cor:real-projection}, with no approximation introduced by the real-arithmetic reformulation.
\end{proposition}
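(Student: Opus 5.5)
The plan is to verify the two identities in \eqref{eq:realification-action} by direct block computation. Everything else then follows by induction and by specializing to real initial data.

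First, write $Z=P+\ii Q$ with $P=\operatorname{Re}Z$ and $Q=\operatorname{Im}Z$ real. These are well defined because $Z$ acts on the complexification of a real space, so it splits uniquely into real and imaginary parts with respect to the real structure. For $U=X+\ii Y$ we expand
\[
 ZU=(PX-QY)+\ii(QX+PY).
\]
Since $PX-QY$ and $QX+PY$ are real, they are exactly $\operatorname{Re}(ZU)$ and $\operatorname{Im}(ZU)$. These are precisely the two block rows of $\mathfrak R(Z)(X^{\mathsf T},Y^{\mathsf T})^{\mathsf T}$, which gives the first identity.

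For multiplicativity, take $Z_j=P_j+\ii Q_j$. Then
\[
 Z_1Z_2=(P_1P_2-Q_1Q_2)+\ii(P_1Q_2+Q_1P_2).
\]
Multiplying the two $2\times2$ block matrices gives the same four blocks in the pattern \eqref{eq:realification-map}. An alternative is to apply the action identity twice: both sides of $\mathfrak R(Z_1Z_2)=\mathfrak R(Z_1)\mathfrak R(Z_2)$ act identically on every doubled vector, so they coincide as real operators.

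Next, $\Phi_{\rm disp}$ is a finite product of complex linear operators $I+\tau\mathcal C_h$ and $I+\tau\mathcal D_h$, where $\tau$ is complex and $\mathcal C_h,\mathcal D_h$ are real. Multiplicativity therefore gives
\[
 \widehat\Phi_{\rm disp}=\mathfrak R(\Phi_{\rm disp})=\prod\mathfrak R(I+\tau\mathcal C_h)\cdots,
\]
and each factor is an explicit real $2N\times2N$ substep. This is the practical realization. Induction on $n$ using multiplicativity then gives $\widehat\Phi_{\rm disp}^{\,n}=\mathfrak R(\Phi_{\rm disp}^{\,n})$.

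Finally, apply the action identity with $Z=\Phi_{\rm disp}^{\,n}$, $X=U_h^0$ and $Y=0$; this yields \eqref{eq:realified-output}. The top block is $\operatorname{Re}U_{h,+}^n=\widehat U_h^n$, as defined in Corollary~\ref{cor:real-projection}.

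There is no real obstacle here. The only point needing care is that $\operatorname{Re}$ and $\operatorname{Im}$ of an operator must be taken relative to the given real structure, since the discrete Maxwell operators are real. Once that is fixed, the block computation is mechanical. I would also note that the equalities are exact identities, so no approximation enters.
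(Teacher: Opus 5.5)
Your proposal is correct and follows the paper's proof essentially step for step. You split $Z$ into real and imaginary parts to get the action identity, deduce multiplicativity, apply it to the ordered product of the five subflows and to powers by induction, and then set the initial imaginary block to zero; your remark that $\operatorname{Re}$ and $\operatorname{Im}$ of an operator are taken relative to the real structure of the discrete space makes explicit what the paper uses implicitly.
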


\begin{proof}
Formula~\eqref{eq:realification-action} follows by writing $Z=Z_R+\ii Z_I$ and $U=X+\ii Y$ and collecting real and imaginary parts.  The multiplicative identity in~\eqref{eq:realification-action} then follows directly, and applying it to the ordered product of the five subflows gives~\eqref{eq:realified-Phi}.  Repeated application gives $\mathfrak R(\Phi_{\rm disp}^n)=\mathfrak R(\Phi_{\rm disp})^n$, and~\eqref{eq:realified-output} follows because the initial imaginary component is zero.
\end{proof}

The realified substeps are particularly simple because $\mathcal C^2=\mathcal D^2=0$.  For a complex $\mathcal C$-coefficient $c_m=p_m+\ii q_m$,
\begin{equation}\label{eq:realified-C}
 \mathfrak R\!\left(I+c_m\dt\mathcal C\right)
 =
 \begin{bmatrix}
 I+p_m\dt\mathcal C&-q_m\dt\mathcal C\\
 q_m\dt\mathcal C&I+p_m\dt\mathcal C
 \end{bmatrix}.
\end{equation}
Writing the scaled fields as $\bm h=\bm h_R+\ii\bm h_I$ and
$\bm e=\bm e_R+\ii\bm e_I$, the corresponding electric update is
\begin{align}\label{eq:realified-C-fields}
 \bm e_R^+&=\bm e_R+\nu\dt\left(p_m\nabla\times\bm h_R-q_m\nabla\times\bm h_I\right),\\
 \bm e_I^+&=\bm e_I+\nu\dt\left(q_m\nabla\times\bm h_R+p_m\nabla\times\bm h_I\right),
\end{align}
while $\bm h_R$ and $\bm h_I$ are unchanged.  In the present family the two nontrivial $\mathcal D$-coefficients are real, $d_1=d_2=1/2$, so the magnetic updates decouple:
\begin{align}\label{eq:realified-D-fields}
 \bm h_R^+&=\bm h_R-\nu d_m\dt\,\nabla\times\bm e_R,\\
 \bm h_I^+&=\bm h_I-\nu d_m\dt\,\nabla\times\bm e_I,
\end{align}
with $\bm e_R$ and $\bm e_I$ unchanged.  For the $a_+$ branch the three $\mathcal C$ coefficients have
\begin{equation}\label{eq:realified-coeffs}
 (p_1,q_1)=\left(\frac14,\frac{1}{4\sqrt3}\right),\quad
 (p_2,q_2)=\left(\frac12,-\frac{1}{2\sqrt3}\right),\quad
 (p_3,q_3)=(p_1,q_1),
\end{equation}
while the $a_-$ branch reverses the signs of all $q_m$.

\begin{remark}[why the imaginary coefficients cannot simply be deleted]\label{rem:not-delete-imag}
The realification above retains the variables $(\bm h_I,\bm e_I)$ and their coupling to the real variables.  Setting $q_m=0$ in~\eqref{eq:realified-C-fields} is therefore \emph{not} an implementation of the same method.  For the phase-optimized coefficients it replaces
$a_\pm=1/4\pm\ii/(4\sqrt3)$ by $a=1/4$, hence changes $g_2$ from $1/12$ to $1/16$ and reduces $\Phi_{\rm disp}$ exactly to the real stability-optimized member $\Phi_{\rm stab}$.  Equivalently, in one-step matrix notation, if $M=A+\ii B$, then
\[
 \operatorname{Re}(M^2)=A^2-B^2\neq A^2=(\operatorname{Re}M)^2
\]
in general.  The auxiliary imaginary component generated at one step therefore feeds back into the real component at later steps and is part of the phase-cancellation mechanism.
\end{remark}

\begin{remark}[arithmetic cost]\label{rem:realified-cost}
The realified formulation removes the need for a complex data type but not the intrinsic cost of the complex method.  It stores two real copies of every field component and applies the real spatial stencil to both copies.  Its storage and leading arithmetic work are therefore comparable to a native complex implementation and are roughly doubled relative to the corresponding real-coefficient splitting.  The main benefit is conceptual and implementation portability: the auxiliary role of the imaginary component and its coupling to the physical real component are completely explicit.
\end{remark}

For reference, Table~\ref{tab:coefficients} summarizes the three members used in the numerical section. The Yee CFL value is listed separately because its spatial stencil differs from~\eqref{eq:spatialdis-new}.
\begin{table}[htbp]
\centering
\caption{Coefficient choices in the palindromic family~\eqref{eq:Phi-a}. The two-dimensional thresholds use the fourth-order staggered spatial stencil.}\label{tab:coefficients}
\begin{tabular}{lllll}
\toprule
Method & $a$ & $g_2$ & threshold in $q^2$ & $s^*$ for $d=2$\\
\midrule
Strang & $1/2$ & $0$ & $4$ & $6/(7\sqrt2)\approx0.6061$\\
$\Phi_{\rm stab}$ & $1/4$ & $1/16$ & $16$ & $12/(7\sqrt2)\approx1.2122$\\
$\Phi_{\rm disp}$ & $1/4\pm\ii/(4\sqrt3)$ & $1/12$ & $12$ & $6\sqrt3/(7\sqrt2)\approx1.0498$\\
\bottomrule
\end{tabular}
\end{table}
The standard two-dimensional Yee scheme with its second-order staggered spatial difference has the familiar threshold $s^*=1/\sqrt2\approx0.7071$~\cite{sullivan2000}. For the common fourth-order spatial stencil used by the splitting methods, $\Phi_{\rm stab}$ doubles the Strang CFL threshold, whereas $\Phi_{\rm disp}$ enlarges it by the factor $\sqrt3$. The leading phase coefficient in~\eqref{eq:phase-expansion} is, respectively, $1/24$, $1/96$, and $0$ for Strang, $\Phi_{\rm stab}$, and $\Phi_{\rm disp}$.

\section{Numerical experiments}\label{sec:numerics}
The numerical experiments are designed to illustrate the three properties analyzed in Section~\ref{sec:design}: the distinct CFL thresholds, second-order field convergence, and the reduction of temporal phase error. We deliberately use the homogeneous periodic problem for which the Fourier analysis applies directly, so that the numerical evidence tests the stated theory without introducing separate interface or boundary-closure effects. All splitting computations in this section use the fourth-order staggered spatial derivative~\eqref{eq:spatialdis-new}. No claim of exact discrete energy conservation is made.

For the real-coefficient methods we write $\widehat U_h^n=U_h^n$. For $\Phi_{\rm disp}$ we may use either native complex arithmetic or the exactly equivalent doubled real formulation of Proposition~\ref{prop:realification}. In both representations the complete auxiliary state is retained through all substeps and all time steps; no projection is fed back into the recurrence. The physical output is
\begin{equation}\label{eq:physical-output}
 \widehat U_h^n=\operatorname{Re}U_h^n,
\end{equation}
as justified by Corollary~\ref{cor:real-projection}. In the doubled real implementation this is simply the first block $X_h^n$. We compare the real exact field with $\widehat U_h^n$ in the energy-scaled discrete norm
\begin{equation}\label{eq:error-norm}
\begin{aligned}
 \norm{U-\widehat U_h}_{h,\varepsilon,\mu}^2
 =\sum_{j,k}\bigl(&\mu\,|H_x-\widehat H_{x,h}|^2
 +\mu\,|H_z-\widehat H_{z,h}|^2\\
 &+\varepsilon\,|E_y-\widehat E_{y,h}|^2\bigr)\Delta x\Delta z.
\end{aligned}
\end{equation}
For $\Phi_{\rm disp}$ we additionally monitor $\|\operatorname{Im}U_h^n\|_h$ and, when useful, the auxiliary complex-state error $\|U_h^n-U_h(t_n)\|_h$. These quantities diagnose the complexified trajectory but are not used as the physical field error.

\subsection{Periodic wave: stability, convergence, and dispersion}\label{sec:num-periodic}
Consider the two-dimensional TM system
\begin{align}\label{eq:TM-periodic}
 \partial_t H_x&=\frac1\mu\partial_z E_y, &
 \partial_t H_z&=-\frac1\mu\partial_x E_y,\\
 \partial_t E_y&=\frac1\varepsilon\left(\partial_z H_x-\partial_x H_z\right).
\end{align}
On $\Omega=(0,1)^2$ with periodic boundary conditions and $\varepsilon=\mu=1$, an exact solution is
\begin{align}
 H_x&=\frac1{\sqrt2}\sin(2\pi x)\cos(2\pi z)\sin(2\sqrt2\pi t),\\
 H_z&=-\frac1{\sqrt2}\cos(2\pi x)\sin(2\pi z)\sin(2\sqrt2\pi t),\\
 E_y&=\sin(2\pi x)\sin(2\pi z)\cos(2\sqrt2\pi t).
\end{align}

We first take $N=100$, so that $\Delta x=\Delta z=0.01$, and choose $\Delta t=0.8\Delta x$, i.e. $s=0.8$. This value exceeds both the two-dimensional Yee threshold $1/\sqrt2$ and the fourth-order-Strang threshold $6/(7\sqrt2)$, while remaining below the thresholds of $\Phi_{\rm stab}$ and $\Phi_{\rm disp}$ in Table~\ref{tab:coefficients}. Figures~\ref{fig:1numesplit} and~\ref{fig:1numyee} therefore provide a direct test of the predicted separation of the stability intervals.

\begin{figure}[htbp]
\centering
\includegraphics[width=0.80\textwidth]{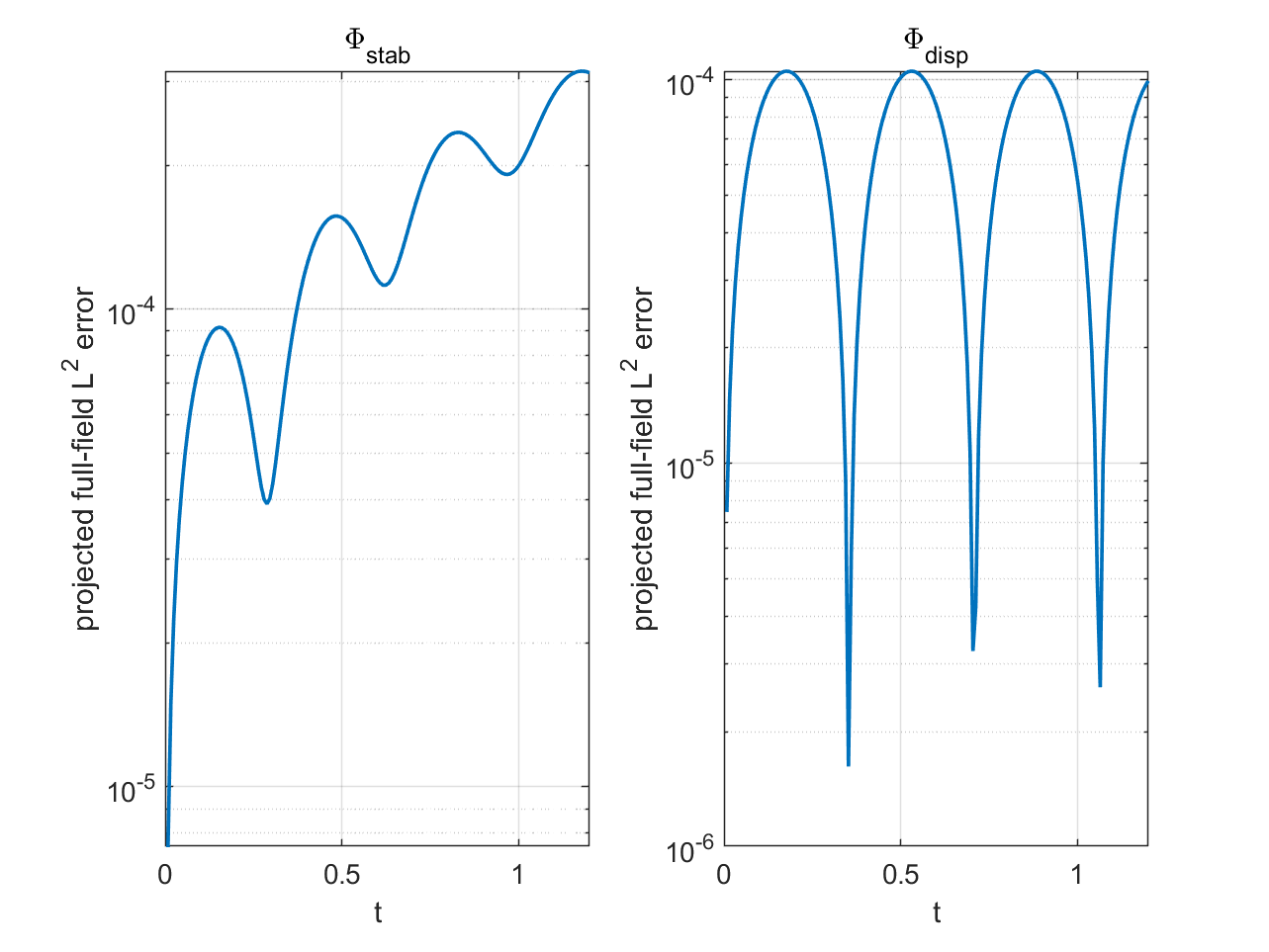}
\caption{Field-error histories for $\Phi_{\rm stab}$ (left) and $\Phi_{\rm disp}$ (right), with $N=100$, fourth-order staggered spatial discretization, and CFL number $s=0.8$. For $\Phi_{\rm disp}$ the reported field is the output projection $\operatorname{Re}U_h^n$ from~\eqref{eq:physical-output}.}
\label{fig:1numesplit}
\end{figure}

\begin{figure}[htbp]
\centering
\includegraphics[width=0.80\textwidth]{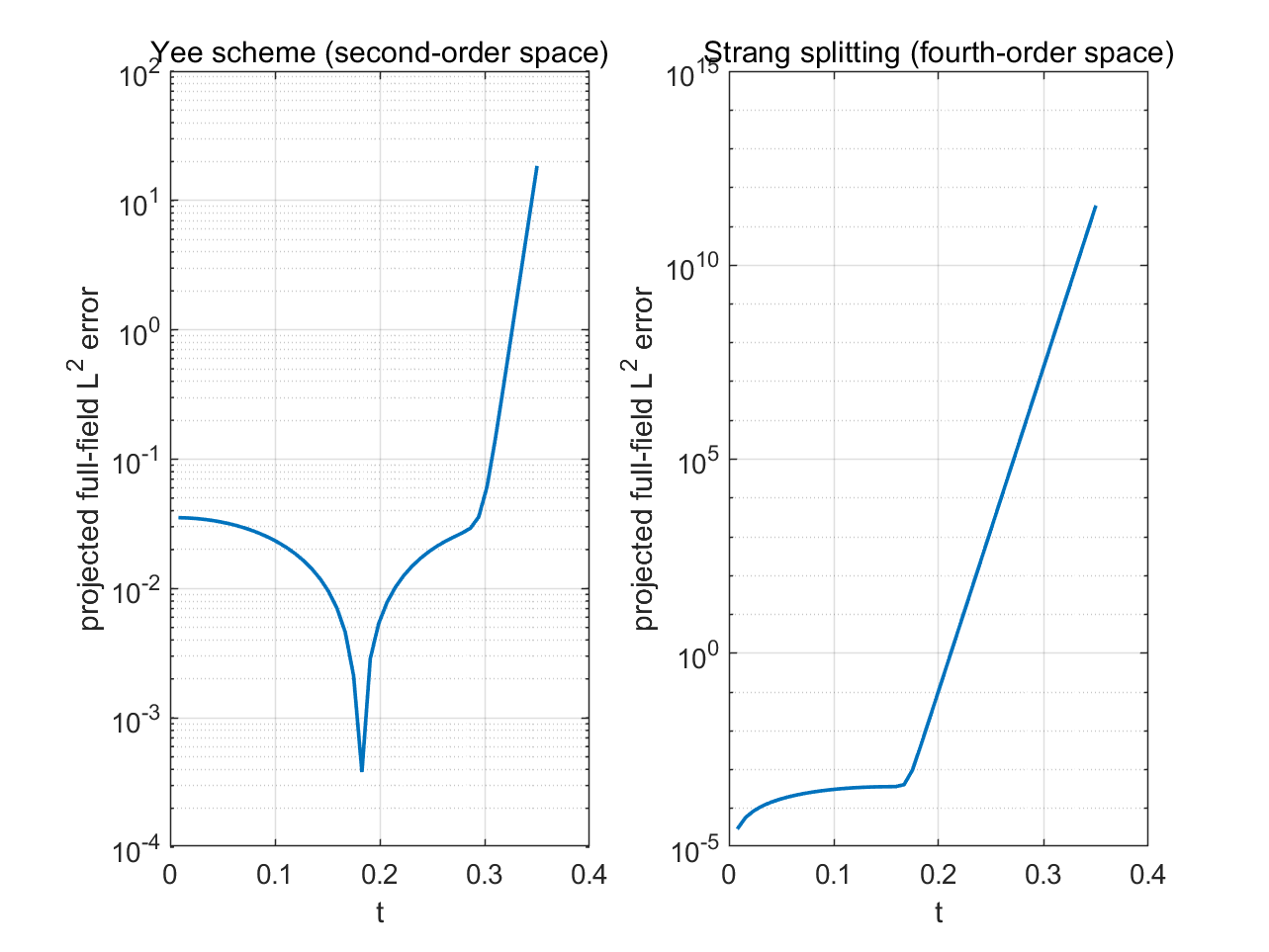}
\caption{Field-error histories for the Yee scheme (left) and Strang splitting with the fourth-order spatial stencil (right), both computed with $s=0.8$. Since this CFL number exceeds their respective two-dimensional stability thresholds, both methods become unstable.}
\label{fig:1numyee}
\end{figure}

To verify the temporal convergence order, we fix the spatial grid at $N=80$ and vary the time step by taking $s=0.4,0.2,0.1,0.05$. The temporal convergence plots in Fig.~\ref{fig:1conver} should be interpreted together with Proposition~\ref{prop:semidiscrete-convergence} and Corollary~\ref{cor:real-projection}. Both optimized compositions are second order for the complete physical field. For $\Phi_{\rm disp}$ the comparison is made after the output projection $\operatorname{Re}U_h^n$. The smaller error observed over the plotted range is consistent with cancellation of the leading temporal phase defect and possible cancellation of imaginary error components under projection, not with a fourth-order field-convergence theorem.

\begin{figure}[htbp]
\centering
\includegraphics[width=0.80\textwidth]{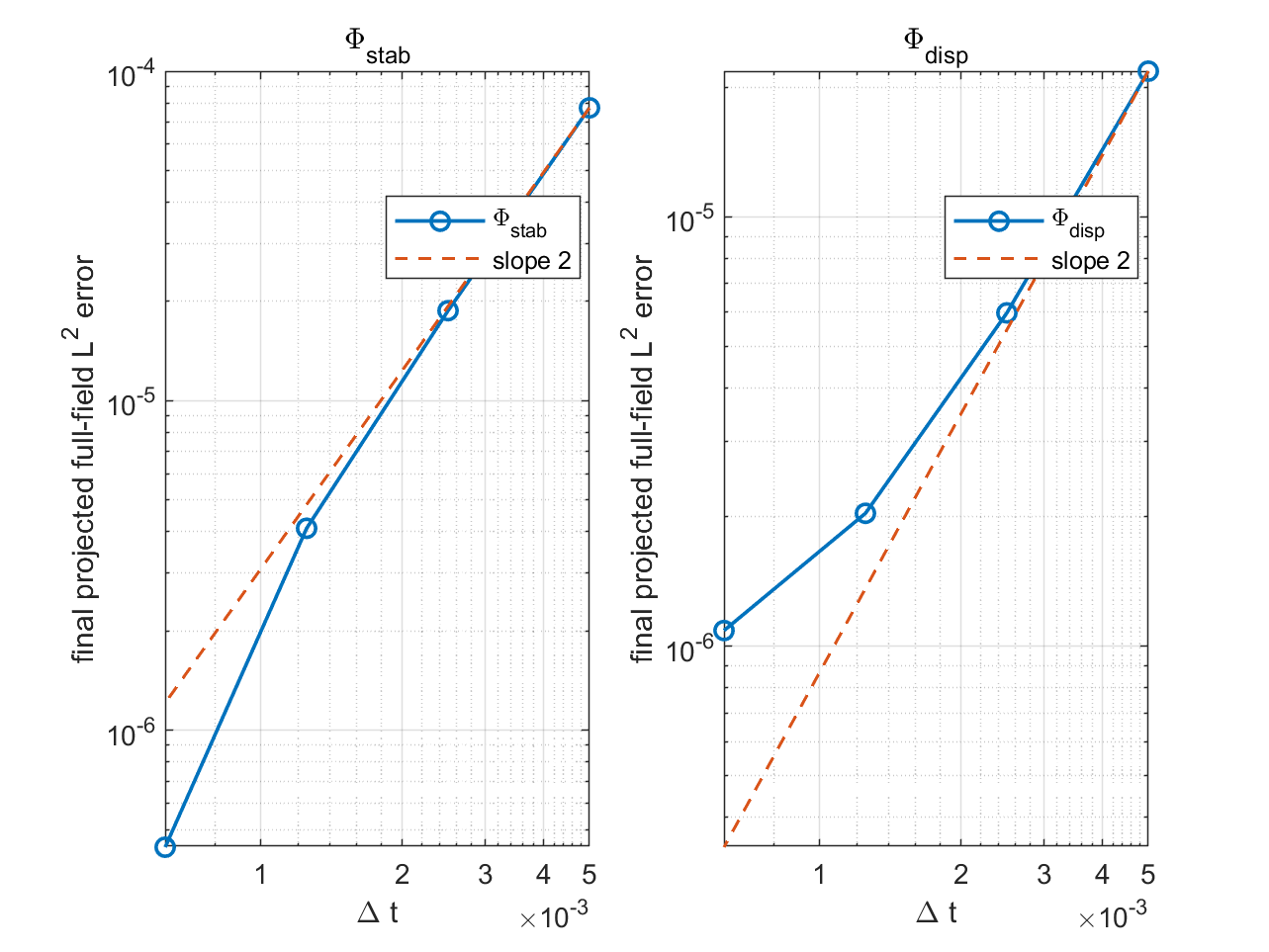}
\caption{Temporal convergence behavior of $\Phi_{\rm stab}$ (left) and $\Phi_{\rm disp}$ (right) on the fixed grid $N=80$. The reference slope confirms second-order convergence for the projected physical field.}
\label{fig:1conver}
\end{figure}

Finally, Fig.~\ref{fig:1disp} compares the numerical phase error. For these tests we use $N=60$ and $s=0.5$. The reduction produced by $\Phi_{\rm disp}$ agrees with Theorem~\ref{thm:disp}; in contrast, $\Phi_{\rm stab}$ is designed to maximize the real-coefficient stability interval.

\begin{figure}[htbp]
\centering
\includegraphics[width=0.80\textwidth]{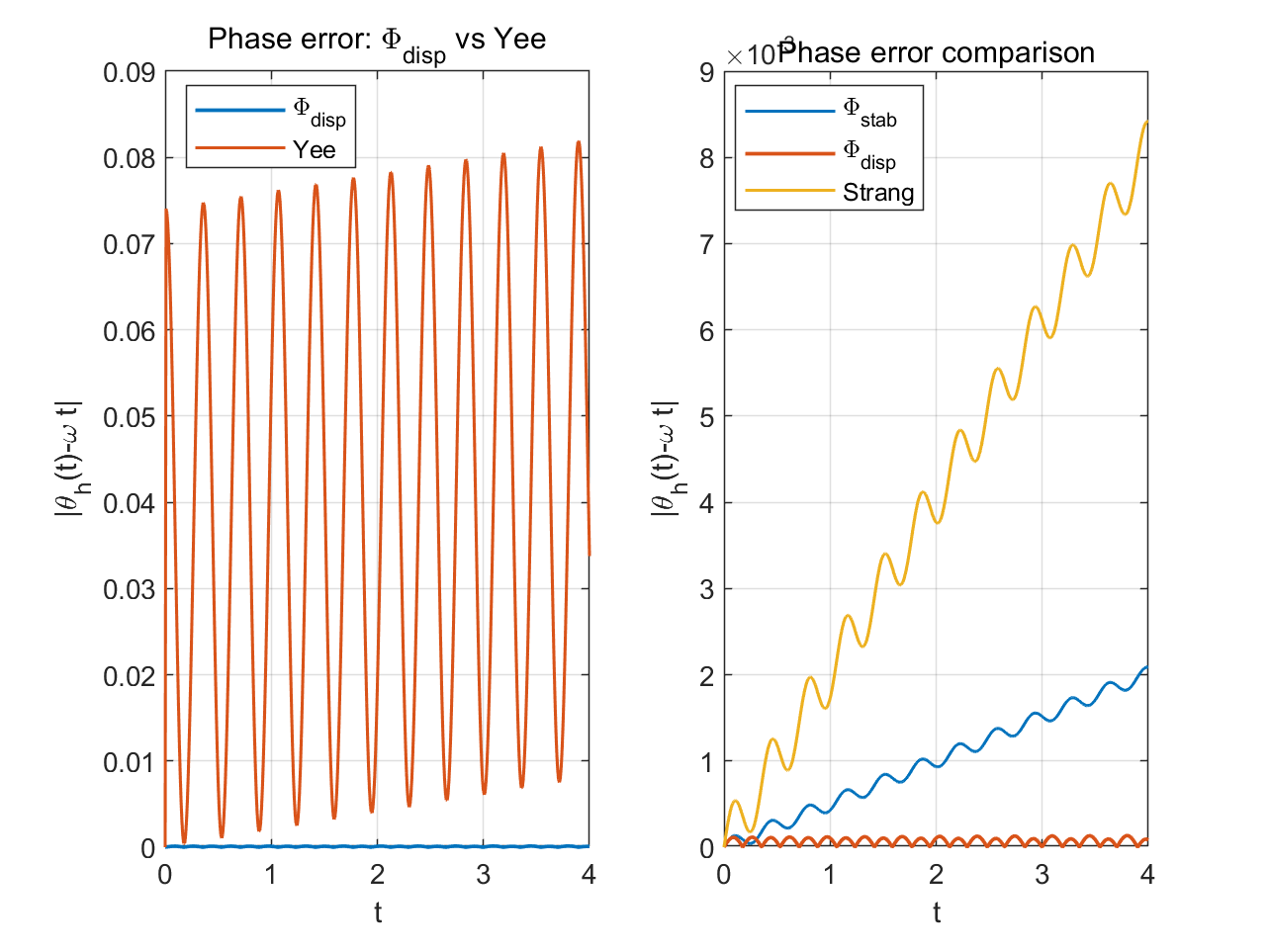}
\caption{Numerical phase error for $N=60$ and CFL number $s=0.5$. Left: comparison of $\Phi_{\rm disp}$ and Yee. Right: comparison of $\Phi_{\rm stab}$, $\Phi_{\rm disp}$, and Strang splitting. The phase-optimized method exhibits the smallest accumulated phase error.}
\label{fig:1disp}
\end{figure}

\section{Conclusions}\label{sec:conclusion}
We have analyzed a low-stage electric--magnetic Maxwell splitting as a one-parameter coefficient-design problem. The common Fourier amplification matrix reveals two distinct optimization objectives. Within the real palindromic family, $a=1/4$ is the unique coefficient maximizing the contiguous spectral CFL interval, giving $s_{\rm stab}^*=12/(7\sqrt d)$ for the fourth-order staggered stencil. This value recovers an earlier optimized symplectic-FDTD coefficient set~\cite{kusaf2005}; the contribution here is its sharp extremal characterization within the specified family.

The same reduction exposes a structural barrier for temporal phase matching: every real coefficient satisfies $g_2\le1/16$, whereas cancellation of the leading temporal phase defect requires $g_2=1/12$. The resulting complex-conjugate coefficients give fourth-order temporal phase accuracy for each fixed semidiscrete Fourier mode. Their CFL threshold is $s_{\rm disp}^*=6\sqrt3/(7\sqrt d)$. The complete complex field update nevertheless remains second order, as follows from the off-diagonal expansion and the semidiscrete convergence result. For real Maxwell data, the physically reported approximation is $\operatorname{Re}U_h^n$; this output projection is independent of the chosen conjugate branch and preserves the second-order error bound.

The analysis is deliberately limited to constant material parameters and uniform staggered grids. The phase-optimized method incurs the cost of an auxiliary imaginary component: it may be implemented with native complex arithmetic or, exactly equivalently, as a doubled real system. The latter representation makes clear that deleting the imaginary coefficient is not a cost-free simplification but a change of integrator that destroys the phase-matching condition. Natural extensions include a uniform space--time error analysis, coefficient design for variable material parameters and nonperiodic boundary closures, and multi-objective optimization that accounts simultaneously for stability, phase accuracy, structural preservation, and arithmetic cost.

\section*{Funding}
The work of Hui Duan was supported by the State Key Laboratory of Scientific and Engineering Computing, Chinese Academy of Sciences. The work of Hongliang Li was supported by the National Natural Science Foundation of China [grant number: 12371438 and 12101438].

\end{document}